\documentclass[english,11pt]{amsart}

\usepackage[english]{babel}
\usepackage[T1]{fontenc}
\usepackage{lmodern}

\usepackage{MnSymbol}
\usepackage[centering]{geometry}

\theoremstyle{plain}
\newtheorem{theo}{Theorem}
\newtheorem{lemm}[theo]{Lemma}

\theoremstyle{definition}

\theoremstyle{remark}
\newtheorem{rema}[theo]{Remark}

\newtheorem{theoA}{Theorem}

\newtheorem{lemmA}[theoA]{Lemma}

\usepackage{microtype}
\usepackage{enumerate}
\usepackage{graphicx}
\usepackage{color}
\usepackage{bm}
\usepackage{mathtools}
\usepackage[dvipsnames]{xcolor}
\definecolor{FlatRed}{RGB}{231,76,60}
\definecolor{FlatGreen}{RGB}{46,204,113}
\definecolor{FlatBlue}{RGB}{52,152,219}
\definecolor{FlatYellow}{RGB}{241,196,15}
\colorlet{FlatViolet}{FlatRed!50!FlatBlue}
\colorlet{FlatBrown}{FlatRed!50!FlatGreen}
\colorlet{FlatOrange}{FlatRed!50!FlatYellow}
\colorlet{FlatCyan}{FlatGreen!50!FlatBlue}

\usepackage{enumitem}
\usepackage{textcomp}
\usepackage[normalem]{ulem}

\usepackage{hyperref}
\hypersetup{
    colorlinks,
    linkcolor={FlatRed},
    citecolor={FlatGreen},
    urlcolor={FlatBlue}
}

\title[Gaussian correlation inequality via supersymmetry]{Gaussian correlation inequalities from supersymmetric dimensional reduction}

\author{Yichao Huang}
\address{Beijing Institute of Technology, School of Mathematics and Statistics, Beijing, China}
\email{yichao.huang@outlook.com}

\begin{document}

\begin{abstract}
We revisit Royen's proof of the Gaussian correlation inequality from a supersymmetric point of view. Many key elements in Royen's proof of this inequality have natural geometric interpretations in terms of supersymmetric dimensional reduction from $\mathbb{R}^{3|2}$ to $\mathbb{R}^{1|0}$. In particular, the auxiliary multivariate Gamma distributions appearing in Royen's Laplace-transform argument arise naturally as the body of a supersymmetric radial variable on $\mathbb{R}^{3|2}$. The generalization to the half-integer multivariate Gamma case also follows naturally as a dimensional reduction from $\mathbb{R}^{k+2|2}$ to $\mathbb{R}^{k|0}$, and we derive an exact interpolation identity for general smooth test functions. This provides an example in which the supersymmetric localization method is applied to prove correlation inequalities with continuous interpolation.
\end{abstract}

\maketitle

\section{Introduction}
The Gaussian correlation inequality was conjectured in the 1950s and was proved by Royen in 2014~\cite{Royen2014}. It went unnoticed until Lata\l a and Matlak~\cite{Lata_a_2017} reorganized the proof, and various expositions started to circulate, including in the Bourbaki seminar~\cite{Barthe2019}. The following equivalent formulation (\cite[Theorem~1.2]{Barthe2019} or~\cite[Theorem~2]{Lata_a_2017}) of the Gaussian correlation inequality is used in Royen's proof:
\begin{theoA}[Gaussian correlation inequality]\label{th:main}
Let $n\geq n_1\geq 1$ be integers, and $X=(X_1,\dots,X_n)\in\mathbb{R}^{n}$ a centered Gaussian vector. Then
\begin{equation*}
    \mathbb{P}\left[\max_{1\leq i\leq n}|X_i|\leq 1\right]\geq\mathbb{P}\left[\max_{1\leq i\leq n_1}|X_i|\leq 1\right]\mathbb{P}\left[\max_{n_1<i\leq n}|X_i|\leq 1\right].
\end{equation*}
\end{theoA}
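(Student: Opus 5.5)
We follow Royen's interpolation strategy and aim to read each of its steps as a supersymmetric dimensional reduction. After a standard approximation we may assume that the covariance matrix $C$ of $X$ is positive definite. Write it in block form
\[
C=\begin{pmatrix} C_{11} & C_{12}\\ C_{21} & C_{22}\end{pmatrix},
\]
and for $t\in[0,1]$ set
\[
C(t)=\begin{pmatrix} C_{11} & tC_{12}\\ tC_{21} & C_{22}\end{pmatrix}.
\]
Each $C(t)$ is positive definite, being the convex combination $tC+(1-t)\operatorname{diag}(C_{11},C_{22})$. Let $X(t)$ be centered Gaussian with covariance $C(t)$ and put $f(t)=\mathbb{P}[\max_i|X_i(t)|\le 1]$. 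Then $f(1)$ is the left-hand side of the inequality and $f(0)$ is the product on the right. It therefore suffices to show $f'(t)\ge 0$ on $(0,1)$.

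\textbf{Step 1: pass to squares.} The event in $f(t)$ is $\{X_i(t)^2\le 1 \text{ for all } i\}$. The vector $\tfrac12(X_1(t)^2,\dots,X_n(t)^2)$ has a multivariate Gamma distribution of half-integer order $1/2$ with parameter $C(t)$. Its Laplace transform is $\det(I+\Lambda C(t))^{-1/2}$ with $\Lambda=\operatorname{diag}(\lambda_i)$. In supersymmetric language, this Laplace transform is the partition function of a Gaussian superfield on $\mathbb{R}^{1|0}$. The same density is obtained by taking the body of the radial variable $|\Phi_i|^2$ of a superfield on $\mathbb{R}^{k+2|2}$ with $k=1$: the Grassmann pair cancels the two extra bosonic directions (Parisi--Sourlas / McKane--Wegner localization). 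This gives a representation of $f(t)$ as the integral of the indicator $\prod_i \mathbf 1_{[0,1/2]}$ against a density $g_t$ that is explicitly determined by $\det(I+\Lambda C(t))^{-1/2}$.

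\textbf{Step 2: differentiate.} Write $\partial_t\det(I+\Lambda C(t))^{-1/2}$ through $\partial_t\log\det$, which equals $-\tfrac12\operatorname{tr}\bigl((I+\Lambda C(t))^{-1}\Lambda\,\dot C(t)\bigr)$, with $\dot C(t)$ supported on the off-diagonal blocks. Expanding the determinant in principal minors shows that $\partial_t$ of the Laplace transform is a finite sum. Each term is of the form $\lambda_i\lambda_j\cdot(\text{coefficient}\ge0)\cdot(\text{Laplace transform of an order-}3/2\text{ Gamma law in coordinates } i,j)$, where $i\le n_1<j$. The nonnegativity of the coefficients comes from $\frac{d}{dt}\det C(t)_J\ge 0$ for $J$ meeting both blocks: this determinant is even in $t$ and a polynomial with favorable sign structure. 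In the supersymmetric picture this is the reduction from $\mathbb{R}^{3|2}$, which is exactly the order-$3/2$ Gamma law, the body of the radial variable on $\mathbb{R}^{3|2}$.

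\textbf{Step 3: invert.} A factor $\lambda_i$ in Laplace space corresponds to $-\partial_{x_i}$ applied to the density. Integrating these derivatives against the indicator of the box turns them into boundary evaluations. This yields
\[
f'(t)=\sum_{i\le n_1<j}c_{ij}(t)\,\int_{\text{box face}} (\text{nonnegative Gamma density}),
\]
with $c_{ij}(t)\ge0$, so $f'(t)\ge0$.

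The main obstacle is Step 2: establishing the exact nonnegative decomposition of $\partial_t\det(I+\Lambda C(t))^{-1/2}$, i.e. the sign of the minor coefficients. I would first attempt it through the Grassmann integral representation on $\mathbb{R}^{3|2}$. There, $\partial_t$ brings down a quadratic cross term, and completing the square should exhibit a positive-definite quadratic form, which would make the positivity manifest.
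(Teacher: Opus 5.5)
Your Step~2 is where the argument breaks, and Step~3 inherits the error. The $t$-derivative of the Laplace transform does not split into pairwise terms $\lambda_i\lambda_j$ ($i\le n_1<j$), each multiplied by a bivariate order-$3/2$ Gamma transform. What actually happens is Royen's identity, which the paper recovers supersymmetrically. Apply the chain rule $\partial_t D^{-1/2}=-\tfrac12 D^{-3/2}\partial_t D$ to the principal-minor expansion $D=\det(I+\Lambda C(t))=1+\sum_{J\neq\emptyset}\det(C(t)_J)\prod_{j\in J}\lambda_j$. This gives
\[
\partial_t\det(I+\Lambda C(t))^{-1/2}=\tfrac12\det(I+\Lambda C(t))^{-3/2}\sum_{\emptyset\neq J}a_J(t)\prod_{j\in J}\lambda_j,\qquad a_J(t)=-\partial_t\det(C(t)_J).
\]
Every $J$ meeting both blocks contributes, with monomials of every degree. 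For example, with $n=3$ and $n_1=1$ one has $a_{\{1,2,3\}}(t)=2t\det(C_{22})\,C_{12}C_{22}^{-1}C_{21}$, which is generically nonzero. Moreover, $\det(I+\Lambda C(t))^{-3/2}$ is the transform of the full $n$-variate order-$3/2$ law, not of a law in two coordinates.

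For $n\ge 3$ a pairwise representation with nonnegative terms is in fact impossible. As $\lambda_k\to\infty$ the true derivative tends to $0$, whereas a nonnegative pair term with $k\notin\{i,j\}$ does not move. The trace formula you start from does produce pairs. However, each pair is weighted by $C_{ij}$ times an entry of $(I+\Lambda C(t))^{-1}\Lambda$, and neither factor has a sign. This is the Slepian-type interpolation that fails, and Royen's trick exists precisely to avoid it.

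The sign bookkeeping is also off. With blocks restricted to $J$, one has $\det(C(t)_J)=\det(C_{22})\det(C_{11}-t^2C_{12}C_{22}^{-1}C_{21})$. Since $C_{12}C_{22}^{-1}C_{21}$ is positive semidefinite, this is nonincreasing in $t$. So $\partial_t\det(C(t)_J)\le 0$, not $\ge 0$; with your sign the coefficients would come out negative. The reason is Schur-complement monotonicity, not evenness in $t$. Your closing idea of completing the square in the Grassmann representation is indeed how the paper proves $a_J\ge 0$ (Lemma~\ref{lemm:AirJordan}).

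In Step~3, a factor $\lambda_j$ corresponds to $+\partial_{x_j}$ of the order-$3/2$ density $h_t$; the boundary term at $x_j=0$ vanishes. Hence $\int_{[0,1/2]^n}\partial_J h_t=\int_{[0,1/2]^{J'}}h_t(\tfrac12\mathbf{1}_J,x_{J'})\,dx_{J'}\ge 0$, and these terms are summed over faces of every codimension $|J|$, not only codimension two.

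Finally, Step~1 conflates the super-radius with its body. It is $R_i^2=B_i^2+2\xi_i\eta_i$ whose statistics reduce to those of $X_i^2$; the body $B_i^2$ alone carries the order-$3/2$ law. That distinction is exactly the paper's mechanism:
\begin{itemize}
\item expanding $\mathbf{1}_{\{R_i^2\le 1\}}=\mathbf{1}_{\{B_i^2\le 1\}}-2\xi_i\eta_i\delta(1-B_i^2)$ produces the $2^n$ boundary terms indexed by $J$;
\item fermionic Wick contraction gives $2^{|J|}\det(C(\tau)_J)$;
\item the Ward identity (three bosons against two fermions) supplies the factor $-\tfrac12$ that the chain rule supplies on the Laplace side.
\end{itemize}
Your outline never uses this structure, and its Laplace-side decomposition is incorrect, so $f'(t)\ge 0$ is not established.
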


Although Royen's proof of the Gaussian correlation inequality is elementary and the expositions~\cite{Lata_a_2017,Barthe2019} make it accessible to the probability and statistics communities, several of its key steps remain somewhat mysterious and rely on a series of elegantly combined tricks. The purpose of the present paper is to provide a geometric reinterpretation of these tricks using language and intuition from mathematical physics, in particular supersymmetric calculus. The key idea is to recognize Royen's proof as a \emph{supersymmetric dimensional reduction}. More precisely, we show that the auxiliary multivariate Gamma laws appearing in Royen's proof arise as the bosonic body of a natural super-radius on $\mathbb R^{3|2}$, and that the positivity of the interpolation derivative is the consequence of a Ward identity together with the positivity of bosonic boundary integrals. This gives a precise geometric explanation of the determinant expansions and Laplace-transform identities in Royen's proof.

As a direct consequence of the supersymmetric method, we upgrade the Gaussian (as well as the multivariate Gamma distribution following the notation of~\cite{Barthe2019}) correlation inequality to a stronger result. The indicator function in Theorem~\ref{th:main} can be extended to much more general functionals satisfying a sign condition on its mixed partial derivatives of order at most $1$ in each coordinate:
\begin{theo}\label{th:better}
Let $n,k>0$ be positive integers. Let $C\geq 0$ be a positive semidefinite covariance matrix. Denote by $\Gamma_n(\frac{k}{2},C)$ the (half-integer) multivariate Gamma vector whose law is characterized by the Laplace transform
\begin{equation*}
    \forall \lambda\in\mathbb{R}_{\geq 0}^{n},\quad \mathbb{E}\left[e^{-\langle\lambda, \Gamma_n(\frac{k}{2},C)\rangle}\right]=\det(I_n+\Lambda C)^{-\frac{k}{2}}
\end{equation*}
where $\Lambda=\mathrm{diag}(\lambda_1,\dots,\lambda_n)$.

Let $n=n_1+n_2$ and consider the interpolation matrix
\begin{equation*}
    C(\tau)=\begin{pmatrix} C_{11} & \tau C_{12} \\ \tau C_{21} & C_{22}\end{pmatrix}
\end{equation*}
so that $C=C(1)$ and $C(0)$ is a two-component block-diagonal matrix. Then for any smooth test function $\Phi$ with compact support or sufficient decay,
\begin{equation*}
    \frac{d}{d\tau}\mathbb{E}\left[\Phi\left(\Gamma_{n}(\frac{k}{2},C(\tau))\right)\right]=\frac{k}{2}\sum_{\emptyset\neq J\subset[1,\dots,n]}a_J(\tau)\mathbb{E}\left[(-1)^{|J|}\partial_{J}\Phi\left(\Gamma_{n}(\frac{k}{2}+1,C(\tau))\right)\right]
\end{equation*}
where $a_J(\tau)\geq 0$ is non-negative for all $J\subset[1,\dots,n]$ with an explicit expression (see Lemma~\ref{lemm:AirJordan}), and $\partial_J$ denotes the mixed partial deriative $\partial_J=\prod_{j\in J}\frac{\partial}{\partial x_j}$ of order at most $1$ in each coordinate.
\end{theo}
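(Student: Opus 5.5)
The plan is to prove the identity first for exponential test functions $\Phi(x)=e^{-\langle\lambda,x\rangle}$ with $\lambda\in\mathbb{R}^n_{\geq 0}$, where both sides can be computed from the Laplace transform. We then extend to general $\Phi$ by linearity and density. The supersymmetric picture on $\mathbb{R}^{k+2|2}$ explains \emph{why} the shift $\frac k2\mapsto\frac k2+1$ appears: two extra bosonic directions are paired with two fermionic ones. For the proof itself, however, I would use the purely bosonic Laplace-transform computation and keep the geometric interpretation as commentary.

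First, existence of the laws. Since $k$ is an integer, take $X^{(1)},\dots,X^{(k)}$ i.i.d.\ $\mathcal N(0,C(\tau))$ and set $\Gamma_i=\frac12\sum_{m}(X^{(m)}_i)^2$. A direct Gaussian computation gives Laplace transform $\det(I_n+\Lambda C(\tau))^{-k/2}$. The same construction with $k+2$ copies produces $\Gamma_n(\frac k2+1,C(\tau))$. Both require $C(\tau)$ to be positive semidefinite for $\tau\in[0,1]$. This holds because
\[
C(\tau)=\tau C+(1-\tau)\,\mathrm{diag}(C_{11},C_{22}).
\]

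Second, the determinant expansion. For an index set $J$, write $C_J$ for the principal submatrix. Then
\[
\det(I+\Lambda C(\tau))=1+\sum_{\emptyset\neq J}\lambda^J\det C_J(\tau),\qquad \lambda^J=\prod_{j\in J}\lambda_j.
\]
Write $J=J_1\sqcup J_2$ according to the two blocks. If $J_1$ or $J_2$ is empty, then $\det C_J(\tau)$ does not depend on $\tau$. Otherwise, when $C_{J_1J_1}$ and $C_{J_2J_2}$ are invertible, the Schur complement gives
\[
\det C_J(\tau)=\det C_{J_1J_1}\det C_{J_2J_2}\det\bigl(I-\tau^2 M_J\bigr),\qquad M_J=C_{J_2J_2}^{-1/2}C_{J_2J_1}C_{J_1J_1}^{-1}C_{J_1J_2}C_{J_2J_2}^{-1/2}.
\]
Positive semidefiniteness of $C_J$ forces the eigenvalues $\mu_i$ of $M_J$ into $[0,1]$. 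Hence $\tau\mapsto\prod_i(1-\tau^2\mu_i)$ is nonincreasing on $[0,1]$. The singular case follows by perturbing $C$ to $C+\varepsilon I$ and letting $\varepsilon\to 0$. We therefore define $a_J(\tau):=-\partial_\tau\det C_J(\tau)\geq 0$, which gives the explicit formula referred to in Lemma~\ref{lemm:AirJordan}. Differentiating the Laplace transform then yields
\[
\frac{d}{d\tau}\det(I+\Lambda C(\tau))^{-k/2}=\frac k2\det(I+\Lambda C(\tau))^{-\frac k2-1}\sum_{J}a_J(\tau)\lambda^J .
\]
Since $\lambda^Je^{-\langle\lambda,x\rangle}=(-1)^{|J|}\partial_Je^{-\langle\lambda,x\rangle}$, and the factor $\det(\cdot)^{-k/2-1}$ is the Laplace transform of $\Gamma_n(\frac k2+1,C(\tau))$, this is exactly the claimed identity for $\Phi=e^{-\langle\lambda,\cdot\rangle}$.

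Third, the extension to general $\Phi$, which I expect to be the main technical obstacle. Both sides are linear in $\Phi$, so the identity holds for finite combinations of exponentials. I would integrate the identity in $\tau$ to avoid interchanging limits with $d/d\tau$. Next I would approximate a compactly supported smooth $\Phi$, together with its mixed derivatives $\partial_J\Phi$, uniformly on $\mathbb{R}^n_{\geq0}$ by such combinations; the Stone--Weierstrass theorem applies on the one-point compactification, since exponentials separate points. All the random vectors live in $\mathbb{R}^n_{\geq0}$, and their laws depend continuously on $\tau$. The integrated identity therefore passes to the limit, and differentiating back in $\tau$ recovers the pointwise statement. The delicate point is simultaneous approximation of $\Phi$ and all $\partial_J\Phi$. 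If this proves awkward, I would instead use the explicit Gaussian representation: write $\mathbb E[\Phi(\Gamma_n(\frac k2,C(\tau)))]$ as a Gaussian integral and differentiate in the covariance via the heat-equation (Wick) identity. This route gives an identity valid directly for smooth $\Phi$ with decay. The exponential computation then only identifies the coefficients $a_J$.
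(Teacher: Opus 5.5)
Your exponential case is correct. The principal-minor expansion and the chain rule give $\frac{d}{d\tau}\det(I+\Lambda C(\tau))^{-k/2}=\frac{k}{2}\det(I+\Lambda C(\tau))^{-k/2-1}\sum_J a_J(\tau)\lambda^J$, and $\lambda^Je^{-\langle\lambda,x\rangle}=(-1)^{|J|}\partial_Je^{-\langle\lambda,x\rangle}$. Your Schur-complement proof of $a_J\geq 0$ is also fine.

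\textbf{The gap.} The extension to general $\Phi$, which you flagged yourself, is not yet proved, and neither of your two suggestions closes it as stated.
\begin{itemize}
\item Stone--Weierstrass approximates $\Phi$ only, not its derivatives. (Also, only exponentials with all $\lambda_j>0$, plus constants, extend continuously to the one-point compactification of $\mathbb{R}^n_{\geq 0}$.) Passing to the limit on the right-hand side needs $\mathbb{E}[\partial_J\Phi_m(\Gamma_n(\frac{k}{2}+1,C(\tau)))]\to\mathbb{E}[\partial_J\Phi(\Gamma_n(\frac{k}{2}+1,C(\tau)))]$ for every $J$.
\item The heat-equation fallback does not bypass this. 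It gives $\frac{d}{d\tau}\mathbb{E}[\Phi(\Gamma)]=\sum_{i\leq n_1<j}C_{ij}\,\mathbb{E}[(\sum_{m}X^{(m)}_iX^{(m)}_j)\,\partial_i\partial_j\Phi(\Gamma)]$. This involves the law of $\Gamma_n(\frac{k}{2},C(\tau))$, only second derivatives, and a sign-indefinite weight. Matching it with the claimed right-hand side for general $\Phi$ is the content of the theorem, and needs the same density statement.
\end{itemize}

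\textbf{A fix.} Set $u_j=e^{-x_j}$ and $\Psi(u)=\Phi(-\log u_1,\dots,-\log u_n)$. For compactly supported $\Phi$, $\Psi$ is smooth on $[0,1]^n$ and vanishes near every face $\{u_j=0\}$.
\begin{itemize}
\item Approximate $\partial_{u_1}\cdots\partial_{u_n}\Psi$ uniformly by polynomials $p_m$, and put $P_m(u)=\int_{[0,u]}p_m$. Then $\partial_{u_J}P_m\to\partial_{u_J}\Psi$ uniformly for every $J$.
\item $P_m(e^{-x})$ is a finite combination of $e^{-\langle\alpha,x\rangle}$ with $\alpha\in\mathbb{N}^n$, so your exponential identity applies to it.
\item Since $\partial_{x_J}=(-1)^{|J|}\prod_{j\in J}u_j\partial_{u_j}$ with $0<u_j\leq 1$, all $\partial_J$ converge uniformly on $\mathbb{R}^n_{\geq 0}$.
\end{itemize}
Your integrated-in-$\tau$ argument then goes through, and a cutoff handles $\Phi$ with decay. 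Alternatively, continue the integrated identity analytically to $\lambda=-i\omega$ and integrate against $\hat\Phi$.

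\textbf{Comparison with the paper.} With this repair, your argument is a correct proof by a genuinely different route: essentially Royen's Laplace-transform argument, tested against smooth functions. The paper never tests on exponentials.
\begin{itemize}
\item It writes $2\Gamma_n(\frac{k}{2},C(\tau))$ as $(R_i^2)_i$ for a super-radius on $(\mathbb{R}^{k+2|2})^n$.
\item It uses the Ward identity of Lemma~\ref{lemm:BosonFermion} to replace the full $\tau$-derivative, partition function included, by $-\frac{k}{2}$ times the purely fermionic variation.
\item It Taylor-expands $\Phi(R^2)=\sum_J 2^{|J|}\partial_J\Phi(B^2)\prod_{j\in J}\xi_j\eta_j$. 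The fermionic Wick contraction produces $\det C_J(\tau)$, while the bosonic body $\frac{1}{2}B^2$ is automatically $\Gamma_n(\frac{k}{2}+1,C(\tau))$.
\item Positivity of $a_J$ comes from a fermionic effective action and the switching lemma, not from eigenvalues of $M_J$.
\end{itemize}
The paper's route produces $\partial_J\Phi$ and the shifted law directly for arbitrary $\Phi$, with no density step. It also explains the factor $\frac{k}{2}$ and the shift $\frac{k}{2}\mapsto\frac{k}{2}+1$ as a count of bosons against fermions.

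Your route is elementary and fully rigorous; there the shift is just the chain rule on $\det^{-k/2}$. It avoids the formal parts of the Berezin calculus, such as nilpotent shifts inside $\Theta$ and localization for non-compactly supported integrands. The price is the functional-analytic extension that the supersymmetric argument sidesteps.
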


The Gaussian case corresponds to $k=1$. Furthermore, taking a sequence of suitable smooth approximations of the function $\prod_{j\in[1,\dots,n]}\mathbf{1}_{\{|x_j|\leq \frac{1}{2}\}}$, we recover Theorem~\ref{th:main}.

\subsection{Organization of the paper.}
To make the paper pedagogical, we organize the content roughly according to the order in which we tested and unveiled the supersymmetric interpretation behind the Gaussian correlation inequality. We hope that in this way, the reader can get a feeling about our discovery process, and form some intuition on the type of problems that might have a hidden supersymmetric interpretation. In Section~\ref{sec:notations_and_background}, we provide some minimal background on probability and supersymmetric calculus. In Section~\ref{sec:snippets_of_royen_s_proof}, we focus on several instances of Royen's proof which led us to believe that a supersymmetric reinterpretation is plausible. In Section~\ref{sec:supersymmetric_reinterpretation_of_royen_s_proof}, we explain how Royen's proof of the Gaussian correlation inequality is an instance of the supersymmetric dimensional reduction procedure. In Section~\ref{sec:the_case_of_the_multivariate_gamma_distributions}, we show that in this supersymmetric framework, the extension to half-integer multivariate Gamma random variables is also natural and yields the stronger correlation inequality of Theorem~\ref{th:better}.

Royen's original proof is given in~\cite{Royen2014}. We mainly follow the notation of the expository article~\cite{Lata_a_2017} (see also a French version~\cite{Barthe2019} in the Bourbaki seminar). A different proof of the Gaussian correlation inequality (without a continuous monotonic interpolation) via inverse Brascamp-Lieb is~\cite{Milman_25} (it would be interesting to investigate whether that proof also admits a supersymmetric interpretation!). A modern introduction to the Berezin calculus can be found in~\cite{Witten_2019}. In a broader context, this work lies within the tradition of connecting supersymmetry formalism with probability theory~\cite{Bismut_1986}. The supersymmetric variational method for establishing convex correlation inequalities with continuous parameters was initiated in~\cite{Huang:2026aa} in a hyperbolic model with trivial partition function. By contrast, the present work treats a Gaussian model and directly handles the variation of a non-trivial partition function. We believe that this new supergeometric point of view on correlation inequalities may provide useful clues for related challenges, and the Gaussian Correlation Inequality provides an elementary and enlightening introduction to this approach.

\subsection*{Acknowledgement}
Y.H. is partially supported by the National Key R\&D Program of China (No. 2022YFA1006300) as well as NSFC-12301164, NSCF-12671177. We thank Xiaolin Zeng for helpful exchanges on related projects.

\section{Notations and background}\label{sec:notations_and_background}
\subsection{Gaussian vectors}
We follow the notations of Lata\l a--Matlak~\cite{Lata_a_2017}. Consider the $n$-dimensional centered Gaussian vector $X$ as in the statement of Theorem~\ref{th:main}. Let $n=n_1+n_2$ and denote the covariance matrix of $X$ by blocks as
\begin{equation*}
    C=\begin{pmatrix} C_{11} & C_{12} \\ C_{21} & C_{22}\end{pmatrix}
\end{equation*}
where $C_{ij}$ is of size $n_i\times n_j$. Consider the interpolation covariance matrix
\begin{equation}\label{eq:Interpolate_Cov}
    C(\tau)=\begin{pmatrix} C_{11} & \tau C_{12} \\ \tau C_{21} & C_{22}\end{pmatrix}
\end{equation}
for $0\leq \tau\leq 1$: it continuously decorrelates $X$ into two independent components.

\begin{rema}
Throughout the proof below we will assume that $C$ is positive definite, so that we can use its inverse $C^{-1}$. The general positive semidefinite case follows by approximating $C$ by $C+\epsilon I_n$ and then letting $\epsilon$ go to $0$. Notice also that $C(\tau)$ is positive definite for $0\leq \tau\leq 1$ when $C$ is.
\end{rema}

\subsection{Supersymmetric calculus}
We denote by Latin letters $x,y,z,\dots$ the bosonic variables, and by Greek letters $\xi,\eta,\dots$ the fermionic or Grassmann variables. Grassmann variables anti-commute with each other, e.g., $\xi\eta=-\eta\xi$. The (left) fermionic derivative acts by the rule
\begin{equation*}
    \frac{\partial}{\partial\xi}(\xi F)=F,\quad \frac{\partial}{\partial\xi}F=0
\end{equation*}
for any function $F$ that does not contain $\xi$, and analogously for any other Grassmann variables. The fermionic integral is the same operation as the fermionic derivative:
\begin{equation*}
    \int \partial_\xi F(\xi)=\frac{\partial}{\partial\xi}F(\xi).
\end{equation*}

One can formally Taylor expand any function acting on even (i.e. commuting) variables. For example, one has the following Taylor expansion:
\begin{equation}\label{eq:BasicTaylor}
    F(x+\xi\eta)=F(x)+F'(x)\xi\eta,
\end{equation}
and higher-order terms vanish due to anti-commutation of the fermionic variables. One should think of $x+\xi\eta$ as infinitesimally close to $x$~\cite[Section~2.1.2]{Witten_2019}.

We also recall the fermionic Gaussian integral formula: for any $N\times N$ matrix $\Sigma$,
\begin{equation}\label{eq:Fermionic_GaussianIntegral}
    \int \prod_{i=1}^{N}\partial_{\xi_i}\partial_{\eta_i}e^{-\xi^{t}\Sigma\eta}=\det(\Sigma),
\end{equation}
where $\xi=(\xi_1,\dots,\xi_N)$ and similarly for $\eta$.

\section{Snippets of Royen's proof}\label{sec:snippets_of_royen_s_proof}
To get an idea of the hidden supersymmetric structure behind Royen's proof, we start by re-examining some central properties in his proof with supersymmetric calculation.

\subsection{A positivity property}
In the course of proving the Gaussian correlation inequality, the positivity of the derivative of a certain type of determinant was crucially used. In the notation of~\cite{Lata_a_2017} (see also~\cite[Section~3.2]{Barthe2019}), the statement is the following:
\begin{lemmA}\label{lemm:AirJordan}
For any $\emptyset\neq J\subset\{1,\dots,n\}$ and $\tau\in[0,1]$, let $C(\tau)_J$ be the submatrix of the interpolated covariance kernel~\eqref{eq:Interpolate_Cov} restricted to the subset $J\subset \{1,\dots,n\}$. Then for any such $J$ and $\tau\in[0,1]$,
\begin{equation*}
    a_J(\tau)\coloneqq -\frac{\partial}{\partial\tau}\det(C(\tau)_J)\geq 0.
\end{equation*}
\end{lemmA}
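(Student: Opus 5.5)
Fix $J$ and write $J=J_1\sqcup J_2$ with $J_1=J\cap\{1,\dots,n_1\}$ and $J_2=J\cap\{n_1+1,\dots,n\}$. The submatrix $C(\tau)_J$ then has the same block structure as \eqref{eq:Interpolate_Cov}:
\begin{equation*}
    C(\tau)_J=\begin{pmatrix} A & \tau B \\ \tau B^{t} & D\end{pmatrix},
\end{equation*}
with $A=(C_{11})_{J_1}$, $D=(C_{22})_{J_2}$ and $B$ the corresponding block of $C_{12}$. The degenerate cases are immediate. If $J_1$ or $J_2$ is empty, the determinant does not depend on $\tau$, so $a_J\equiv 0$. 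If $\tau=0$, the derivative computed below equals $0$, so $a_J(0)=0$.

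For the main case we use the assumption (Remark in Section~\ref{sec:notations_and_background}) that $C$ is positive definite. Every principal submatrix is then positive definite, and so is $C(\tau)$ for $\tau\in[0,1]$. In particular $A$ and $D$ are positive definite. Taking the Schur complement with respect to $A$ gives
\begin{equation*}
    \det C(\tau)_J=\det A\,\det\bigl(D-\tau^2 B^{t}A^{-1}B\bigr)=\det A\,\det D\,\det\bigl(I-\tau^2 M\bigr),
\end{equation*}
where $M=D^{-1/2}B^{t}A^{-1}BD^{-1/2}$. The matrix $M$ is symmetric positive semidefinite; call its eigenvalues $\mu_1,\dots,\mu_{|J_2|}\geq 0$.

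Positive definiteness of $C(1)_J$ forces $I-M>0$, so every $\mu_i<1$. Hence
\begin{equation*}
    \det C(\tau)_J=\det A\,\det D\prod_i\bigl(1-\tau^2\mu_i\bigr),
\end{equation*}
and each factor $1-\tau^2\mu_i$ is positive on $[0,1]$ and non-increasing in $\tau\ge 0$. A product of positive non-increasing functions is non-increasing, so
\begin{equation*}
    a_J(\tau)=-\partial_\tau\det C(\tau)_J=\det A\,\det D\sum_i 2\tau\mu_i\prod_{l\neq i}\bigl(1-\tau^2\mu_l\bigr)\geq 0.
\end{equation*}
For positive semidefinite $C$, we pass to the limit from $C+\epsilon I_n$. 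The derivative is a polynomial in $\tau$ whose coefficients are continuous in $\epsilon$, so non-negativity survives the limit.

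The only point needing care is the bound $\mu_i<1$, i.e. that positivity of $C(1)_J$ controls the whole interpolation path. It follows from the Schur complement criterion at $\tau=1$. In the supersymmetric spirit of the paper, one could alternatively write $\det C(\tau)_J$ as a fermionic Gaussian integral \eqref{eq:Fermionic_GaussianIntegral}. The $\tau$-derivative then brings down the cross term $-2\tau\,\xi_{J_1}^{t}B\eta_{J_2}$, whose sign one would have to identify with a bosonic positive quantity. The eigenvalue argument above avoids that and is the route I would take.
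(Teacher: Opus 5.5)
Your proof is correct, but it takes a different route from the paper. The paper also splits $J=J_1\sqcup J_2$. It then writes $\det C(\tau)_J$ as the fermionic Gaussian integral \eqref{eq:Fermionic_GaussianIntegral} and integrates out the $J_2$ fermions by a shift. This is a Schur complement with respect to $C_{22}$ rather than your $A$, so the effective action carries $S(\tau)=C_{11}-\tau^2C_{12}C_{22}^{-1}C_{21}$ (blocks restricted to $J$). Differentiating brings down $-2\tau\,\xi_1^{t}C_{12}C_{22}^{-1}C_{21}\eta_1$. The switching lemma $\langle \xi^{t}K\eta\rangle_{\mathrm{f}}=-\langle x^{t}Kx\rangle_{\mathrm{b}}$ then turns this into a bosonic expectation of $|C_{22}^{-1/2}C_{21}x_1|^2\geq 0$. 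In plain linear algebra this is Jacobi's formula $a_J(\tau)=2\tau\det C(\tau)_J\,\mathrm{tr}\bigl(S(\tau)^{-1}K\bigr)$, with $K=C_{12}C_{22}^{-1}C_{21}$ positive semidefinite.

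This also settles the question left open in your closing aside, which is slightly off as stated. Differentiating the full fermionic action brings down $-(\xi_1^{t}C_{12}\eta_2+\xi_2^{t}C_{21}\eta_1)$, with no factor $\tau$. The $2\tau$ appears only after one block has been integrated out, and the sign is then certified by the switching lemma.

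Your eigenvalue argument instead gives the explicit factorization $\det A\,\det D\prod_i(1-\tau^2\mu_i)$, where the $\mu_i$ are the squared canonical correlations between the two blocks. It is shorter and fully elementary. It shows transparently that positivity at $\tau=1$ controls the whole path. It also makes the semidefinite limit trivial, since everything is polynomial in $(\tau,\epsilon)$.

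The paper's route buys structure rather than economy. It exhibits $a_J$ as a fermionic variation with a bosonic positivity certificate. That is the same boson--fermion trade that drives Lemma~\ref{lemm:BosonFermion} and motivates the whole supersymmetric reading.
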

The original proof of this result relies on the Schur complement formula: this is actually the starting point of this paper, since the Schur complement formula is also used to establish formulas for the Berezin superdeterminant, see~\cite[Equation~(3.8)]{Witten_2019}. Therefore, we suspected that there are some hidden supersymmetries behind (at least this segment of) Royen's proof. We now give an alternative supersymmetric proof of the above lemma, inspired by the calculations in~\cite[Lemma~5]{MR3904155}.
\begin{proof}
The first step is to rewrite the determinant as a fermionic integral using~\eqref{eq:Fermionic_GaussianIntegral}. Let $J_1=J\cap\{1,\dots,n_1\}$ and $J_2=J\cap\{n_1+1,\dots,n\}$ so that $J=J_1\cup J_2$. Write
\begin{equation*}
    C(\tau)_J=\begin{pmatrix} C_{11} & \tau C_{12} \\ \tau C_{21} & C_{22}\end{pmatrix}
\end{equation*}
in this decomposition, where we drop the $J$ subscript on the right-hand side. Define the action for the fermionic vectors $\xi,\eta$ indexed by $J$ as
\begin{equation*}
    S(\xi,\eta)=\sum_{i,j\in J}\xi_i C_{ij}\eta_j=(\xi_1)^{t}C_{11}\eta_1+(\xi_2)^{t}C_{22}\eta_2+\tau(\xi_1)^{t}C_{12}\eta_2+\tau(\xi_2)^{t}C_{21}\eta_1,
\end{equation*}
with $\xi=(\xi_1,\xi_2)$ in the decomposition $J=J_1\cup J_2$ and similarly for $\eta=(\eta_1,\eta_2)$.

Let $J$ be the index of a system of interacting particles: in the decomposition $J=J_1\cup J_2$, think of $J_1$ as representing the subsystem that we are interested in, and $J_2$ as representing the outside boundary conditions. The Schur complement formula is equivalent to studying the marginal law on $J_1$: this can be seen by completing the square. We thus determine the effective action $S_{\text{eff}}$ on $(\xi_1,\eta_1)$ (i.e. study its marginal law in probabilistic terms), by integrating out $(\xi_2,\eta_2)$ conditional on $(\xi_1,\eta_1)$. Define
\begin{equation*}
    \xi'_2=\xi_2+\tau C_{22}^{-1}C_{21}\xi_1,\quad \eta'_2=\eta_2+\tau C_{22}^{-1}C_{21}\eta_1,
\end{equation*}
so that $S(\xi,\eta)=S_{\text{eff}}(\xi_1,\eta_1)+S'(\xi'_2,\eta'_2)$ where
\begin{equation*}
    S_{\text{eff}}(\xi_1,\eta_1)=(\xi_1)^{t}(C_{11}-\tau^2C_{12}C_{22}^{-1}C_{21})\eta_1
\end{equation*}
and $S'(\xi'_2,\eta'_2)=(\xi'_2)^{t}C_{22}\eta'_2$ decouples from $(\xi_1,\eta_1)$.

Performing the fermionic Gaussian integral~\eqref{eq:Fermionic_GaussianIntegral} in the decoupled coordinate system $(\xi_1,\eta_1,\xi'_2,\eta'_2)$ yields
\begin{equation*}
    \det(C(\tau)_J)=\det(C_{22})\int \prod_{i\in J_1}\partial_{\xi_i}\partial_{\eta_i} e^{-S_{\text{eff}}(\xi_1,\eta_1)}
\end{equation*}
so that differentiating with respect to $\tau$ inside the integrals gives
\begin{equation*}
    a_J(\tau)=\det(C_{22})\int \prod_{i\in J_1}\partial_{\xi_i}\partial_{\eta_i} e^{-S_{\text{eff}}(\xi_1,\eta_1)} (-2\tau(\xi_1)^{t}C_{12}C_{22}^{-1}C_{21}\eta_1).
\end{equation*}
It remains to establish the positivity of the integral, which can be rewritten with the normalized fermionic expectation $\langle\cdot\rangle$ with respect to the action $S_{\text{eff}}$ as
\begin{equation*}
    a_J(\tau)=-2\tau\det(C(\tau)_{J})\langle (\xi_1)^{t}C_{12}C_{22}^{-1}C_{21}\eta_1\rangle.
\end{equation*}
For this, we use the switching lemma~\cite[Lemma~7]{Huang:2026aa} to trade the fermionic expectation $\langle (\xi_1)^{t}C_{12}C_{22}^{-1}C_{21}\eta_1\rangle$ for a bosonic one $-\langle (x_1)^tC_{12}C_{22}^{-1}C_{21}x_1\rangle$ (here $x_1$ is the vector indexed by $J_1$) with action
\begin{equation*}
    S_1=\frac{1}{2}\sum_{i,j\in J_1}x_i(C_{11}-\tau^2C_{12}C_{22}^{-1}C_{21})x_j.
\end{equation*}
We will recall the proof of the switching lemma in Lemma~\ref{lemm:BosonFermion} when additional notations are introduced. Finally, $a_J(\tau)$ is a positive linear combination of bosonic expectations $\langle (x_1)^tC_{12}C_{22}^{-1}C_{21}x_1\rangle$, which are non-negative as $(x_1)^tC_{12}C_{22}^{-1}C_{21}x_1=|C_{22}^{-\frac{1}{2}}C_{21}x_1|^2\geq 0$.
\end{proof}

\subsection{A determinantal expansion formula}\label{sub:a_determinantal_expansion_formula}
Another curious trick in Royen's proof is the following determinant expansion formula~\cite[Lemma~5.(i)]{Lata_a_2017}: for any square matrix $A$ of size $n$,
\begin{equation}\label{eq:Leibniz}
    \det(I_n+A)=1+\sum_{\emptyset\neq J\subset\{1,\dots,n\}}\det(A_J).
\end{equation}
Proving this is relatively easy, but this identity is quite important in Royen's proof, and we want to understand why it is useful. Investigating this question ultimately leads to the supersymmetric dimensional reduction picture, which we will explain in Section~\ref{sec:supersymmetric_reinterpretation_of_royen_s_proof}. Here, we give a rather trivial proof with fermionic integration as a preparation (several other determinantal identities of Royen's proof can be shown similarly, but we explain below why we believe that understanding~\eqref{eq:Leibniz} is instrumental to the supersymmetric interpretation).

Using~\eqref{eq:Fermionic_GaussianIntegral}, we write
\begin{equation*}
    \det(I_n+A)=\int \prod_{i=1}^{n}\partial_{\xi_i}\partial_{\eta_i}\exp\left(-\sum_{l=1}^{n}\xi_l\eta_l\right)\exp\left(-\sum_{1\leq i,j\leq n}\xi_i A_{ij}\eta_j\right).
\end{equation*}
Taylor expanding the first exponential:
\begin{equation*}
    \exp\left(-\sum_{l=1}^{n}\xi_l\eta_l\right)=\sum_{J'\subset\{1,\dots,n\}}\prod_{l\in J'}(-\xi_l\eta_l).
\end{equation*}
For a fixed $J'$, denote by $J=\{1,\dots,n\}\setminus J'$. Since fermionic variables are nilpotent, in order to pair with the product $\prod_{l\in J'}(-\xi_l\eta_l)$, the second exponential $\exp\left(-\sum_{1\leq i,j\leq n}\xi_i A_{ij}\eta_j\right)$ is reduced to
\begin{equation*}
    \exp\left(-\sum_{i,j\in J}\xi_i A_{ij}\eta_j\right).
\end{equation*}
Using~\eqref{eq:Fermionic_GaussianIntegral} again to integrate out the above display yields~\eqref{eq:Leibniz}.

\begin{rema}
The use of identity~\eqref{eq:Leibniz} in Royen's proof is very audacious: it splits a determinant into $2^{n}$ pieces, and each of these pieces was later shown to contribute positively. If one examines Royen's proof carefully, each piece is ultimately mapped to a lower dimensional simplex of the unit cube $[0,1]^{n}$, but the dimensions of these simplices take different values in $\{0,\dots,n\}$. In the supersymmetric method presented in the sequel, these $2^{n}$ boundary terms are packaged into one single Taylor expansion, giving a geometric explanation of the naturalness of this operation.
\end{rema}

\section{Supersymmetric reinterpretation of Royen's proof}\label{sec:supersymmetric_reinterpretation_of_royen_s_proof}
We now rewrite Royen's proof using a systematic supergeometric interpretation. The main goal is to provide a perhaps physically more natural alternative to the Laplace transformation identification in Royen's proof, where the special probability distributions constructed by Royen appear naturally as canonical objects in the supersymmetric interpretation. In particular, the variational nature of Royen's proof, reinterpreted via supersymmetric integration by parts, remains unaltered.

\subsection{Supersymmetric radius and hypercube}
Recall that in Theorem~\ref{th:main} we are interested in the joint distribution of the radius $|X_i|^2$ for a centered Gaussian vector $X$. The corresponding bosonic action for $X\in\mathbb{R}^{n}=(\mathbb{R}^{1|0})^{n}$ is
\begin{equation}\label{eq:Action_X}
    S_{\text{b}}(X)=\frac{1}{2}\sum_{1\leq i,j\leq n}X_i(C(\tau)^{-1})_{ij}X_j.
\end{equation}
We first use supersymmetric dimensional reduction (but in the opposite direction) to lift $|X_i|^2$ to a super-radius in the superspace $\mathbb{R}^{3|2}$, introducing identical copies $Y,Z$ of $X$ and dual fermionic counterparts $\xi,\eta$ by the total action
\begin{equation*}
    S_{\text{tot}}(X,Y,Z,\xi,\eta)=S_{\text{b}}(X)+S_{\text{b}}(Y)+S_{\text{b}}(Z)+S_{\text{f}}(\xi,\eta)=S_{\text{b}}(X,Y,Z)+\sum_{1\leq i,j\leq n}\xi_i (C(\tau)^{-1})_{ij}\eta_j,
\end{equation*}
and define the super-radius $R_i^2=X_i^2+Y_i^2+Z_i^2+2\xi_i\eta_i$. The Berezin volume form on $(\mathbb{R}^{3|2})^{n}$ is the flat one, $d\mu=\prod_{i=1}^{n}dX_idY_idZ_i\partial_{\xi_i}\partial_{\eta_i}$.\footnote{In the sequel, all one-dimensional bosonic Lebesgue measures are normalized by $(2\pi)^{-1/2}$ so that we omit the renormalization factor $(2\pi)^{-\frac{3n}{2}}$ in $d\mu$.} The super-hypercube of $\mathbb{R}^{3n|2n}$ is defined as
\begin{equation*}
    U_n=\bigcap_{i=1}^{n}\{R_i^2\leq 1\}.
\end{equation*}

Following~\cite[Section~3.4]{Witten_2019}, we need to recall what this means since $R_i$ is not a real number. For simplicity, let us define $\{R_1^2\leq 1\}$; the other cases follow similarly and one then takes the intersection to get $U_n$. In this case, the boundary $\{R_1^2=1\}$ is defined by the equation $f(R_1)=0$ where $f(x)=x^2-1$. Notice that the function $f$ is real when the odd variables $\xi_1,\eta_1$ are set to $0$. Let $\Theta(x)$ be the real Heaviside function, i.e. $\Theta(x)=1$ when $x\geq 0$ and $\Theta(x)=0$ when $x<0$. The set $\{R_1^2\leq 1\}$ is then defined formally by the condition that $f\leq 0$; more precisely, we define
\begin{equation*}
    \int_{\{R_1^2\leq 1\}}\sigma=\int_{\mathbb{R}^{3n|2n}}\Theta(-f)\sigma
\end{equation*}
for any integral form $\sigma$ on $\mathbb{R}^{3n|2n}$ of codimension $0$ such that $\Theta(-f)\sigma$ has compact support. It should be noted that in light of~\eqref{eq:BasicTaylor}, we will use formulas such as
\begin{equation*}
    \Theta(a+b\xi\eta)=\Theta(a)+b\xi\eta\delta(a)
\end{equation*}
where $a$ is real, as in the discussion below~\cite[Equation~(3.54)]{Witten_2019}.

\subsection{Dimensional reduction}\label{sub:dimensional_reduction}
We now give a supersymmetric interpretation to~\cite[Equation~(5) and Lemma~8]{Lata_a_2017} or~\cite[Lemma~2.2]{Barthe2019}, where Royen constructed a special probability distribution to identify with Laplace transform calculations. Indeed, we show by supersymmetric dimensional reduction that the probability of $\{X_i^2\}\in[0,1]^{n}$ is the same as the probability of $\{R_i^2\}\in U_n$ for the super-radius. This amounts to showing that the extra bosons $Y,Z$ exactly cancel with the extra fermions $\xi,\eta$: this is known as (Parisi-Sourlas) supersymmetric dimensional reduction~\cite{Parisi_1979}. The special probability distribution that Royen constructed will then be shown to be the law of the bosonic body part of $R_i^2$.

We introduce as in~\cite{MR2728731} the distinguished supersymmetric operator $Q$ that interchanges $(Y,Z)$ with $(\xi,\eta)$ as $Q=\sum_{i=1}^{n}Q_i$ with
\begin{equation}\label{eq:Q}
    Q_i=\xi_i\frac{\partial}{\partial Y_i}+\eta_i\frac{\partial}{\partial Z_i}+Y_i\frac{\partial}{\partial\eta_i}-Z_i\frac{\partial}{\partial\xi_i}.
\end{equation}
Since the combination $Y_i^2+Z_i^2+2\xi_i\eta_i$ is $Q_i$-closed, the action $S_{\text{tot}}(X,Y,Z,\xi,\eta)$ is also $Q$-closed, and the flat Berezin volume form $d\mu$ is $Q$-invariant. The supersymmetric localization formula~\cite[Appendix~C]{MR2728731}) yields the following:
\begin{lemm}[Supersymmetric dimensional reduction from $\mathbb{R}^{3|2}$ to $\mathbb{R}^{1|0}$]\label{lemm:dim_red}
For any $F:\mathbb{R}^{n}\to\mathbb{R}$ smooth with enough decay that the integrals exist,
\begin{equation*}
    \int_{(\mathbb{R}^{1|0})^{n}}e^{-S_{\text{b}}(X)}F(X_1^2,\dots,X_n^2)=\int_{(\mathbb{R}^{3|2})^{n}}e^{-S_{\text{tot}}(X,Y,Z,\xi,\eta)}F(R_1^2,\dots,R_n^2),
\end{equation*}
where we omitted the respective flat Berezin forms in the integrals.
\end{lemm}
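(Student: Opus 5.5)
The plan is to prove the identity by a supersymmetric homotopy: deform the test function so that the $(Y,Z,\xi,\eta)$ dependence disappears, and show the integral does not change along the deformation. For $t\in[0,1]$ put
\begin{equation*}
    I(t)=\int_{(\mathbb{R}^{3|2})^{n}}e^{-S_{\text{b}}(X)-t\,(S_{\text{b}}(Y,Z)+S_{\text{f}}(\xi,\eta))}F\bigl(X_1^2+t(Y_1^2+Z_1^2+2\xi_1\eta_1),\dots\bigr).
\end{equation*}
Equivalently, I rescale $(Y,Z,\xi,\eta)\mapsto\sqrt{t}\,(Y,Z,\xi,\eta)$. The Berezinian of this rescaling is trivial, since the bosonic Jacobian $t^{n}$ cancels the fermionic factor $t^{-n}$. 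The change of variables shows $I(t)$ is independent of $t\in(0,1]$ and equals the right-hand side at $t=1$. It therefore suffices to compute the limit as $t\to0$. This limit cannot be read off by setting $t=0$ naively, because the Gaussian in $(Y,Z)$ degenerates.

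So I will use the more standard localization argument directly. Write $\omega_i=Y_i^2+Z_i^2+2\xi_i\eta_i$ and $S_{\text{tot}}=S_{\text{b}}(X)+\frac12\sum_{ij}(C^{-1})_{ij}\bigl(Y_iY_j+Z_iZ_j\bigr)+\sum_{ij}\xi_i(C^{-1})_{ij}\eta_j$. The integrand is a function of $X$, of the $\omega_i$, and of the $Q$-closed bilinears $\omega_{ij}=Y_iY_j+Z_iZ_j+\xi_i\eta_j+\xi_j\eta_i$; I will check that the off-diagonal quadratic terms combine into these (using the symmetry of $C^{-1}$). Hence the integrand is $G(X,(\omega_{ij}))$ with $G$ smooth, and the key step is the localization lemma (\cite[Appendix~C]{MR2728731}):
\begin{equation*}
    \int_{(\mathbb{R}^{2|2})^{n}}G\bigl((\omega_{ij})\bigr)\,d\mu=G(0)
\end{equation*}
for $Q$-invariant, sufficiently decaying $G$.

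To prove the localization lemma, I will insert $e^{-s\sum_i\omega_i}$ and note that the derivative in $s$ equals $\int Q(\cdot)$ of something, because $\omega_i=Q(\lambda_i)$ with $\lambda_i=Y_i\xi_i+Z_i\eta_i$ up to sign and normalisation. Since $Q$ is a derivation, $G$ is $Q$-closed and $\int Q(\cdot)\,d\mu=0$ for decaying integrands, this derivative vanishes. Letting $s\to\infty$, the Gaussian concentrates at the origin: with the $(2\pi)^{-1/2}$ normalisation and the fermionic factor $\det(sI\cdot 2/2)$, the normalisations cancel exactly, giving $G(0)$.

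Applying this fibrewise in $X$ with $G(\omega)=e^{-\sum(C^{-1})_{ij}\omega_{ij}/2}F(X_i^2+\omega_{ii})$ leaves exactly $e^{-S_{\text{b}}(X)}F(X_1^2,\dots,X_n^2)$, which is the left-hand side. The Heaviside-type expansions are not needed here because $F$ is smooth; the Taylor rule \eqref{eq:BasicTaylor} makes $F(R_i^2)$ well defined.

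The main obstacle is justifying the vanishing of $\int Q(\cdot)\,d\mu$ and the $s\to\infty$ limit. This requires showing that $Q$-exact integrands are total derivatives: $\partial_Y,\partial_Z$ give boundary terms that vanish by decay, and $\partial_\xi,\partial_\eta$ integrate to zero by the Berezin rules. It also requires a dominated-convergence argument for the bosonic body, whose $\xi\eta$-coefficients involve derivatives of $F$; these are controlled by the assumed smoothness and decay.
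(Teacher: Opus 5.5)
Your argument is essentially the paper's: the appendix applies the localization lemma (quoted from \cite{MR2728731}) fibrewise in $X$ to $e^{-S_{\text{b}}(X)}e^{-S_{\text{b}}(Y,Z)-S_{\text{f}}(\xi,\eta)}F$. Your explicit check that the off-diagonal bilinears $\omega_{ij}$ are $Q$-closed fills in a step the paper only asserts, since it infers $Q$-closedness of the whole action from the diagonal combinations $Y_i^2+Z_i^2+2\xi_i\eta_i$ alone. Your $s\to\infty$ sketch supplies a proof of the localization lemma, which the paper merely cites.

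There is one slip to correct in that sketch. With $Q$ as in \eqref{eq:Q}, $Q(Y_i\xi_i+Z_i\eta_i)=-Y_iZ_i+Z_iY_i=0$, and no choice of signs or normalisation makes that a primitive of $\omega_i$. The correct primitive is $\lambda_i=Y_i\eta_i-Z_i\xi_i$. Also, the fermionic factor in the limit is $\det(2sI)=(2s)^n$, not $\det(sI)$; it is this that exactly cancels the bosonic $(2s)^{-n}$.
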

The proof relies on the fundamental supersymmetric localization theorem and is included in Appendix~\ref{sub:dimensional_reduction} for completeness. In the situation we are interested in, the Heaviside function $\Theta$ as a real function can be approximated by smooth and compactly supported functions. In particular, the above Lemma~\ref{lemm:dim_red} yields that for any $\emptyset\neq J\subset\{1,\dots,n\}$,
\begin{equation*}
    \mathbb{P}\left[\max_{i\in J}X_i^2\leq 1\right]=\mathbb{P}\left[\max_{i\in J}R_i^2\leq 1\right],
\end{equation*}
where the probability on the right hand side should be understood in the sense of supersymmetric integration (normalized by the partition function $Z=\det(C)^{\frac{1}{2}}$) as in Lemma~\ref{lemm:dim_red}. Therefore, the statistics of the super-radius $R_i$ is reduced to that of the real radius $|X_i|$, concluding the exactness of the supersymmetric lift from $\mathbb{R}^{1|0}$ to $\mathbb{R}^{3|2}$.

The situation is similar to the supersymmetric dimensional lift in the supersymmetric hyperbolic sigma models from $H^{0|2}$ to $H^{2|4}$, see~\cite[Proposition~2.7]{MR4218682} where we learned the use of the dimensional reduction technique.

\subsection{Supergeometry behind the Gaussian correlation inequality}\label{sub:supergeometry_behind_the_gaussian_correlation_inequality}
Define the bosonic body part of the squared super-radius $R_i^2$ by $B_i^2=X_i^2+Y_i^2+Z_i^2$. By Taylor expansion~\eqref{eq:BasicTaylor},
\begin{equation}\label{eq:Taylor_RB}
    \mathbf{1}_{\{R_i^2\leq 1\}}=\mathbf{1}_{\{B_i^2\leq 1\}}-2\xi_i\eta_i\delta(1-B_i^2).
\end{equation}
The supersymmetric picture provides an alternative approach to Royen's Laplace transform argument. In particular, the desired probability distribution $h_{3,C}$ constructed in~\cite[Lemma~8]{Lata_a_2017} or~\cite[Lemma~2.2]{Barthe2019} appears exactly as the law of $\frac{1}{2}B_i^2$ above, without any delicate Laplace transform identification. In other words, there is no need to manually find a well-designed explicit probability distribution to fit the Laplace transform; the supersymmetric dimensional reduction naturally constructs it for us.

We now show how the probability distribution in Royen's proof appears, before presenting the final proof. By dimensional reduction, we study the sign of
\begin{equation*}
    \frac{\partial}{\partial\tau}\mathbb{P}\left[\max_{1\leq i\leq n}X_i^2\leq 1\right]=\frac{\partial}{\partial\tau}\mathbb{P}\left[\max_{1\leq i\leq n}R_i^2\leq 1\right]=\frac{\partial}{\partial\tau}\left[\det(C)^{-\frac{1}{2}}\int_{(\mathbb{R}^{3|2})^{n}} d\mu\, e^{-S_{\text{tot}}(X,Y,Z,\xi,\eta)}\mathbf{1}_{U_n}\right],
\end{equation*}
where the $\tau$-dependence of the variables is implicit. The indicator over $U_n$ is the product of the above expansion~\eqref{eq:Taylor_RB} over $1\leq i\leq n$: this can be organized as
\begin{equation*}
    \sum_{J\subset\{1,\dots,n\}}\prod_{j\in J}(-2\xi_j\eta_j\delta(1-B_j^2))\prod_{i\in J'}\mathbf{1}_{\{B_i^2\leq 1\}}
\end{equation*}
where $J'=\{1,\dots,n\}\setminus J$. Therefore
\begin{equation}\label{eq:J_expansion}
    \int_{(\mathbb{R}^{3|2})^{n}} d\mu\, e^{-S_{\text{tot}}}\mathbf{1}_{U_n}=\sum_{J\subset\{1,\dots,n\}}\int_{(\mathbb{R}^{3|2})^{n}} d\mu\, e^{-S_{\text{b}}-S_{\text{f}}}\prod_{j\in J}(-2\xi_j\eta_j\delta(1-B_j^2))\prod_{i\in J'}\mathbf{1}_{\{B_i^2\leq 1\}}.
\end{equation}
Similarly to Section~\ref{sub:a_determinantal_expansion_formula}, we first integrate out the fermionic variables $\prod_{j\in J}\xi_j\eta_j$. Since the fermionic variables and the bosonic variables decouple in the action, the fermionic Wick contraction yields that the normalized fermionic expectation is
\begin{equation}\label{eq:FermionWick}
    \mathbb{E}_{\text{f}}\left[\prod_{j\in J}-2\xi_j\eta_j\right]=\det(C)\int_{(\mathbb{R}^{3|2})^{n}} d\mu_{\text{f}}\, e^{-S_{\text{f}}}\prod_{j\in J}(-2\xi_j\eta_j)=2^{|J|}\det(C_{J}).
\end{equation}
Notice the appearance of $\det(C_{J})$: if we hit it with $\frac{\partial}{\partial\tau}$, this gives $-a_J(\tau)$ which we studied in Lemma~\ref{lemm:AirJordan}. The remaining bosonic expectation is
\begin{equation}\label{eq:BosonicPositive}
    \det(C)^{-\frac{3}{2}}\int_{(\mathbb{R}^{3|0})^n}e^{-S_b(X,Y,Z;\tau)}\prod_{j\in J}\delta(1-B_j^2)\prod_{i\in J'}\mathbf{1}_{\{B_i^2\leq 1\}}.
\end{equation}
All we need for the proof is that this integral is \emph{positive}. If we denote by $h_{\tau}(x_1,\dots,x_n)$ the density of the random variable $(B_1(\tau)^2,\dots,B_n(\tau)^2)$ (this is exactly the random variable Royen used to construct the special probability density $h_{3,C}$), the last expression integrates exactly to
\begin{equation*}
    \int_{\prod_{i\in J'}[0,1]}h_{\tau}(1_{J},x_{J'})dx_{J'}.
\end{equation*}
This quantity matches with the last expression in~\cite[Section~2]{Lata_a_2017}, which is the key element in the final step in Royen's original proof.

However, these observations do not yield directly the result if one compares with Royen's calculations: we are missing a multiplicative factor $-\frac{1}{2}$, and the $\tau$-derivative should also hit the bosonic expectation term (as well as the partition function renormalization). But this is exactly where the supersymmetric localization method comes to the rescue. Intuitively, the variation of each of the bosonic component is exactly half of the fermionic variation generated by the pair of fermions, and since we have $3$ bosons and $2$ fermions, the total variation is the fermionic variation multiplied by the factor $-3\cdot \frac{1}{2}+1=-\frac{1}{2}$.

\subsection{Supersymmetric localization and proof of the Gaussian correlation inequality}
We are now in a position to write down the supersymmetric translation of Royen's proof. We first gather some notations and basic facts. Let $n=n_1+n_2$ as in Theorem~\ref{th:main}.
\begin{itemize}
    \item The covariance matrix $C(\tau)$ is defined in~\eqref{eq:Interpolate_Cov}, with $\tau\in[0,1]$.
    \item The bosonic action $S_{\text{b}}(X;\tau)$ is defined in~\eqref{eq:Action_X}; its dependence on $\tau$ is now written.
    \item The fermionic action $S_{\text{f}}(\xi,\eta;\tau)=\sum_{1\leq i,j\leq n}\xi_i (C(\tau)^{-1})_{ij}\eta_j$ is also defined above.
    \item The supersymmetric operator $Q^{Y,Z}=\sum_{1\leq i\leq n}Q^{Y,Z}_i$ with $Q^{Y,Z}_i$ in~\eqref{eq:Q} interchanges $(Y,Z)$ with $(\xi,\eta)$; we will also consider its permutations $Q^{X,Y}$ and $Q^{Z,X}$.
    \item The super-radius $R_{i}=\sqrt{X_i^2+Y_i^2+Z_i^2+2\xi_i\eta_i}$ is invariant under all $Q$ above.
    \item The body part of $R_{i}$, namely $B_{i}=\sqrt{X_i^2+Y_i^2+Z_i^2}$, is viewed as an element of $\mathbb{R}_{\geq 0}$.
\end{itemize}

As explained in Section~\ref{sub:dimensional_reduction} using dimensional reduction, we study the $\tau$-variation of
\begin{equation}\label{eq:total_tau_derivative}
\begin{split}
    \frac{\partial}{\partial\tau}\mathbb{P}\left[\max_{1\leq i\leq n}X_i^2\leq 1\right]&=\frac{\partial}{\partial\tau}\mathbb{P}\left[\max_{1\leq i\leq n}R_i^2\leq 1\right]\\
    =&\frac{\partial}{\partial\tau}\left[\det(C(\tau))^{-\frac{1}{2}}\int_{(\mathbb{R}^{3|2})^{n}} d\mu\, e^{-S_{\text{tot}}(X,Y,Z,\xi,\eta;\tau)}\mathbf{1}_{U_n}\right]
\end{split}
\end{equation}
and show that it is positive for $\tau\in[0,1]$.
\begin{lemm}[Ward identity or switching lemma]\label{lemm:BosonFermion}
The $\tau$-derivative above is equal to
\begin{equation}\label{eq:tau'_tau}
    \frac{\partial}{\partial\tau}\left[-\frac{1}{2}\det(C(\tau'))^{-\frac{3}{2}}\det(C(\tau))\int_{(\mathbb{R}^{3|2})^{n}} d\mu\, e^{-S_{\text{b}}(X,Y,Z;\tau')}e^{-S_{\text{f}}(\xi,\eta;\tau)}\mathbf{1}_{U_n}\right]_{\big|\tau'=\tau},
\end{equation}
where the subscript means that we first treat $\tau,\tau'$ as independent variables in the expression inside $[\cdot]$, take the partial derivative with respect to $\tau$ only, and finally evaluate at $\tau'=\tau$. In other words, the variation~\eqref{eq:total_tau_derivative} is proportional to the variation with respect only to the fermionic variables, multiplied by the constant $-\frac{1}{2}$.
\end{lemm}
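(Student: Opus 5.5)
We propose to separate the $\tau$-dependence in~\eqref{eq:total_tau_derivative} into its four sources: the normalization $\det(C(\tau))^{-\frac{1}{2}}$, and the three copies $S_{\text{b}}(X;\tau)$, $S_{\text{b}}(Y;\tau)$, $S_{\text{b}}(Z;\tau)$, together with the fermionic action $S_{\text{f}}(\xi,\eta;\tau)$. Write $\det(C(\tau))^{-\frac{1}{2}}=\det(C(\tau))^{-\frac{3}{2}}\det(C(\tau))$. The factor $\det(C)^{-\frac{3}{2}}$ normalizes the three bosonic Gaussians, and $\det(C)$ normalizes the fermionic Gaussian, since by~\eqref{eq:Fermionic_GaussianIntegral} the fermionic integral equals $\det(C^{-1})$. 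We introduce independent parameters $\tau_X,\tau_Y,\tau_Z,\tau_f$ with the corresponding normalizations. By the chain rule, the total derivative is then the sum of four partial derivatives evaluated on the diagonal. Each bosonic term, for instance the $X$-term, is the variation of $\det(C(\tau_X))^{-\frac{1}{2}}\int e^{-S_{\text{b}}(X;\tau_X)}\cdots$. Using $\partial_\tau C^{-1}=-C^{-1}\dot C C^{-1}$, it equals the normalized expectation of $-\frac12 X^t\partial_\tau(C^{-1})X+\frac12\mathrm{tr}(C^{-1}\dot C)$ inserted into the full superintegral. Likewise, the fermionic term is the expectation of $-\xi^t\partial_\tau(C^{-1})\eta-\mathrm{tr}(C^{-1}\dot C)$.

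The key step is a Ward identity, the switching lemma. On the diagonal $\tau_X=\tau_Y=\tau_Z=\tau_f=\tau$, the integrand $e^{-S_{\text{tot}}}\mathbf{1}_{U_n}$ is annihilated by $Q^{Y,Z}$, $Q^{X,Y}$ and $Q^{Z,X}$, because $R_i^2$ and the action are $Q$-closed and $d\mu$ is $Q$-invariant. We will show that for any symmetric matrix $M$,
\begin{equation*}
    \Big\langle \sum_{i,j}Y_iM_{ij}Y_j\Big\rangle=-\Big\langle\sum_{i,j}\xi_iM_{ij}\eta_j\Big\rangle.
\end{equation*}
To do so, we exhibit an odd element $\omega$, e.g.\ $\omega=\sum_{ij}Y_iM_{ij}\xi_j$ up to sign, such that $Q^{Y,Z}\omega=Y^tMY+\xi^tM\eta$ (after symmetrizing in $Z$). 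We then use $\int d\mu\,Q(\omega\, e^{-S_{\text{tot}}}\mathbf{1}_{U_n})=0$, which is Berezin--Stokes applied to a $Q$-exact integrand with compact support. We must check carefully that the supersymmetric Heaviside function $\Theta(1-R_i^2)$ is $Q$-invariant, so that no boundary terms arise; this follows since it is a function of $R_i^2$ alone. The permuted operators give the same identity for $X$ and for $Z$.

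Applying the identity with $M=\partial_\tau(C^{-1})$, each bosonic quadratic variation equals $\frac12$ times the fermionic quadratic variation with the opposite sign convention. The trace terms combine as $3\cdot\frac12-1=\frac12$, matching the ratio $-\frac12$ against the fermionic trace term $-\mathrm{tr}$. Summing, the total derivative is $(1-\frac32)=-\frac12$ times the pure fermionic variation. The pure fermionic variation is exactly the $\tau$-partial derivative in~\eqref{eq:tau'_tau}, once the normalization is arranged as $\det(C(\tau'))^{-\frac32}\det(C(\tau))$. We expect the main obstacle to be the sign and factor bookkeeping in the Ward identity, namely getting the precise $\omega$ and its symmetrization with $Z$. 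The rigorous justification of integrating by parts against the distributional indicator $\mathbf{1}_{U_n}$ is a second concern; we would handle it by first working with a smooth $F(R_1^2,\dots,R_n^2)$ as in Lemma~\ref{lemm:dim_red} and then approximating.
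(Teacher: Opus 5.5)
Your proposal is essentially the paper's proof. The key step is the same Ward identity $\langle Y^{t}MY\rangle=-\langle \xi^{t}M\eta\rangle$ obtained from $Q^{Y,Z}$-exactness, giving the factor $1-\frac{3}{2}=-\frac{1}{2}$. The differences are cosmetic: the paper uses the permutation symmetry of $X,Y,Z$ rather than $Q^{X,Y},Q^{Z,X}$, and it handles $\det(C(\tau))^{-\frac{1}{2}}$ by the chain rule on the unnormalized integral rather than through per-sector trace terms.

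There are two bookkeeping slips to fix. First, the correct primitive is $\omega=Y^{t}M\eta$, for which $Q^{Y,Z}\omega=\xi^{t}M\eta+Y^{t}MY$ exactly. Your choice gives $Q^{Y,Z}(Y^{t}M\xi)=-Y^{t}MZ$, which is not the required combination even up to sign; the $\xi$-version that works is $-Z^{t}M\xi$, and it serves the $Z$-term. Second, the trace terms should be $-\frac{1}{2}\mathrm{tr}(C^{-1}\dot C)$ per boson and $+\mathrm{tr}(C^{-1}\dot C)$ for the fermion pair, both opposite to what you wrote. Since both signs are flipped, the ratio $-\frac{1}{2}$ is unaffected.
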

\begin{proof}
We study the $\tau$-variation of $\frac{\partial}{\partial\tau}\left[\det(C(\tau))^{-\frac{1}{2}}\int_{(\mathbb{R}^{3|2})^{n}} d\mu\, e^{-S_{\text{tot}}(X,Y,Z,\xi,\eta;\tau)}\mathbf{1}_{U_n}\right]$.

Let us start with the partition function $\det(C(\tau))^{\frac{1}{2}}$ as an illustration. By definition,
\begin{equation*}
    \det(C(\tau))^{\frac{1}{2}}=\int d\mu\, e^{-S_{\text{tot}}(X,Y,Z,\xi,\eta;\tau)}.
\end{equation*}
Differentiating with respect to $\tau$ and using the symmetry between $X,Y,Z$ yields
\begin{equation*}
    \frac{\partial}{\partial\tau}\det(C(\tau))^{\frac{1}{2}}=-\int d\mu\, e^{-S_{\text{tot}}(X,Y,Z,\xi,\eta;\tau)}\left(\frac{3}{2}Y^{t}\frac{\partial}{\partial\tau}(C(\tau)^{-1})Y+\xi^{t}\frac{\partial}{\partial\tau}(C(\tau)^{-1})\eta\right).
\end{equation*}
Notice that $Q^{Y,Z}(Y^{t}\Sigma\eta)=\xi^{t}\Sigma\eta+Y^{t}\Sigma Y$ for any symmetric matrix $\Sigma$. Therefore, we can perform the supersymmetric $Q^{Y,Z}$ integration by parts using the supersymmetric localization formula Lemma~\ref{lemm:SusyLoca} to get
\begin{equation*}
\begin{split}
    \frac{\partial}{\partial\tau}\det(C(\tau))^{\frac{1}{2}}&=-\int d\mu\, e^{-S_{\text{tot}}(X,Y,Z,\xi,\eta;\tau)}\left(\frac{3}{2}Y^{t}\frac{\partial}{\partial\tau}(C(\tau)^{-1})Y+\xi^{t}\frac{\partial}{\partial\tau}(C(\tau)^{-1})\eta\right)\\
    &=-\int d\mu\, e^{-S_{\text{tot}}(X,Y,Z,\xi,\eta;\tau)}\left(Q(\dots)-\frac{3}{2}\xi^{t}\frac{\partial}{\partial\tau}(C(\tau)^{-1})\eta+\xi^{t}\frac{\partial}{\partial\tau}(C(\tau)^{-1})\eta\right)\\
    &=\frac{1}{2}\int d\mu\, e^{-S_{\text{tot}}(X,Y,Z,\xi,\eta;\tau)}\xi^{t}\frac{\partial}{\partial\tau}(C(\tau)^{-1})\eta\\
    &=-\frac{1}{2}\frac{\partial}{\partial\tau}\left[\int d\mu\, e^{-S_{\text{b}}(X,Y,Z;\tau')}e^{-S_{\text{f}}(\xi,\eta;\tau)}\right]_{\big|\tau'=\tau}.
\end{split}
\end{equation*}

We now move to the integral. Since each $R_i$ is $Q$-invariant, the extra indicator $\mathbf{1}_{U_n}$ is also $Q$-invariant. The argument above goes through verbatim and yields
\begin{equation*}
    \frac{\partial}{\partial\tau}\left[\int d\mu\, e^{-S_{\text{tot}}(X,Y,Z,\xi,\eta;\tau)}\mathbf{1}_{U_n}\right]=-\frac{1}{2}\frac{\partial}{\partial\tau}\left[\int d\mu\, e^{-S_{\text{b}}(X,Y,Z;\tau')}e^{-S_{\text{f}}(\xi,\eta;\tau)}\mathbf{1}_{U_n}\right]_{\big|\tau'=\tau}.
\end{equation*}

Together with the chain rule, we get
\begin{equation*}
\begin{split}
    &\frac{\partial}{\partial\tau}\left[\det(C(\tau))^{-\frac{1}{2}}\int_{(\mathbb{R}^{3|2})^{n}}d\mu\, e^{-S_{\text{tot}}(X,Y,Z,\xi,\eta;\tau)}\mathbf{1}_{U_n}\right]\\
    ={}&\left(\frac{\partial}{\partial\tau}\det(C(\tau))^{-\frac{1}{2}}\right)\int d\mu\, e^{-S_{\text{tot}}}\mathbf{1}_{U_n}+\det(C(\tau))^{-\frac{1}{2}}\frac{\partial}{\partial\tau}\left[\int d\mu\, e^{-S_{\text{tot}}(X,Y,Z,\xi,\eta;\tau)}\mathbf{1}_{U_n}\right]\\
    ={}&\left(-\frac{1}{2}\det(C(\tau))^{-\frac{3}{2}}\frac{\partial}{\partial\tau}\det(C(\tau))\right)\left[\int d\mu\, e^{-S_{\text{b}}(X,Y,Z;\tau')}e^{-S_{\text{f}}(\xi,\eta;\tau)}\mathbf{1}_{U_n}\right]_{\big|\tau'=\tau}\\
    &\quad -\frac{1}{2}\det(C(\tau))^{-\frac{1}{2}}\frac{\partial}{\partial\tau}\left[\int d\mu\, e^{-S_{\text{b}}(X,Y,Z;\tau')}e^{-S_{\text{f}}(\xi,\eta;\tau)}\mathbf{1}_{U_n}\right]_{\big|\tau'=\tau}\\
    ={}&-\frac{1}{2}\det(C(\tau))^{-\frac{3}{2}} \frac{\partial}{\partial\tau}\left[\det(C(\tau))\int_{(\mathbb{R}^{3|2})^{n}}d\mu\, e^{-S_{\text{b}}(X,Y,Z;\tau')}e^{-S_{\text{f}}(\xi,\eta;\tau)}\mathbf{1}_{U_n}\right]_{\big|\tau'=\tau},
\end{split}
\end{equation*}
which is the desired identity.
\end{proof}

The connection we have unveiled so far is summarized as follows:
\begin{equation*}
\begin{array}{|c|c|}
\hline
\text{Royen's proof} & \text{Supersymmetric interpretation}\\
\hline
\text{Auxiliary Gamma law } h_{3,C}
& \text{Body of the super-radius on } \mathbb R^{3|2}\\
\text{Principal-minor expansion}
& \text{Fermionic expansion}\\
\text{Derivative of determinants $a_{J}(\tau)$}
& \text{Proportional to fermionic variations}\\
\text{Multiplicative factor } -1/2
& \text{Ward identity: } 3\text{ bosons and }2\text{ fermions}\\
\text{Boundary integrals}
& \text{Expansion of the super-hypercube indicator}\\
\hline
\end{array}
\end{equation*}

The supersymmetric proof of the Gaussian correlation inequality is now ready.
\begin{proof}[Proof of Theorem~\ref{th:main}]
By Lemma~\ref{lemm:BosonFermion}, we only need to show that~\eqref{eq:tau'_tau} is positive for $\tau\in[0,1]$. Expand the integral in~\eqref{eq:tau'_tau} as in~\eqref{eq:J_expansion}, the bosonic part (with $\tau'$) is independent of $\tau$ and is factorized out, leaving only the $\tau$-variation of the fermionic part, which evaluates to
\begin{equation*}
    2^{|J|}\frac{\partial}{\partial\tau}\det(C_J(\tau))=-2^{|J|}a_J(\tau)\leq 0
\end{equation*}
by~\eqref{eq:FermionWick} and Lemma~\ref{lemm:AirJordan}. For each $J\subset\{1,\dots,n\}$, the bosonic part is positive by~\eqref{eq:BosonicPositive}. Combining the factor $-\frac{1}{2}$, the negative fermionic variation, and the positive bosonic part yields a positive final result.
\end{proof}

\section{Improved general correlation inequality}\label{sec:the_case_of_the_multivariate_gamma_distributions}
In the supersymmetric framework, the extension of the Gaussian correlation inequality to the case of the half-integer multivariate Gamma distributions requires no new machinery: it is the same dimensional reduction except starting from a higher initial bosonic dimension. It can also be seen from the previous discussion that the Gaussian Correlation Inequality follows as a special case of some broader class of inequalities via supersymmetric localization, and the more general result stated in Theorem~\ref{th:better} will be proved in this section.

Let us recall the setup of Theorem~\ref{th:better} with the notation of Section~\ref{sec:supersymmetric_reinterpretation_of_royen_s_proof}. Let $k\geq 1$ be an integer and consider $k$ independent copies of the previous Gaussian vector $X$ with covariance matrix $C(\tau)$, denoted by $X^{(1)},\dots, X^{(k)}$. The multivariate Gamma vector with parameter $\frac{k}{2}$, denoted by $\Gamma_{n}(\frac{k}{2},C(\tau))$, is introduced in Theorem~\ref{th:better} and will be abbreviated as $\Gamma$: the $\tau$ dependence is implicit in the sequel. It is well-known that $\Gamma=(\Gamma_1,\dots,\Gamma_n)$ can be defined component-wise as a sum of squares
\begin{equation*}
    \Gamma_i=\frac{1}{2}\sum_{m=1}^{k}(X^{(m)}_i)^2.
\end{equation*}
When $k=1$, we recover the setting of the Gaussian case of Section~\ref{sec:supersymmetric_reinterpretation_of_royen_s_proof}.

\subsection{Dimensional lift}
For $k=1$ this construction reduces to the Gaussian case discussed above. For general $k\geq 1$, we perform the same dimensional lift from $\mathbb{R}^{k|0}$ to $\mathbb{R}^{k+2|2}$ as in Section~\ref{sub:dimensional_reduction}. That is, we introduce independent copies $Y$ and $Z$ of $X$, as well as their fermionic counterparts $(\xi,\eta)$ with the same covariance matrix $C(\tau)$. More precisely, define the bosonic action
\begin{equation*}
    S_{\text{b}}(X^{(1)},\dots,X^{(k)})=\frac{1}{2}\sum_{m=1}^{k}\sum_{1\leq i,j\leq n}X_i^{(m)}(C(\tau)^{-1})_{ij}X_j^{(m)}
\end{equation*}
and the total action (where $S_{\text{b}}(Y),S_{\text{b}}(Z)$ are defined as in~\eqref{eq:Action_X})
\begin{equation*}
    S_{\text{tot}}=S_{\text{b}}(X^{(1)},\dots,X^{(k)})+S_{\text{b}}(Y)+S_{\text{b}}(Z)+\sum_{1\leq i,j\leq n}\xi_i(C(\tau)^{-1})_{ij}\eta_j.
\end{equation*}
The extended super-radius becomes
\begin{equation*}
    R_i=\sqrt{\sum_{m=1}^{k}(X^{(m)}_i)^2+Y_i^2+Z_i^2+2\xi_i\eta_i},
\end{equation*}
whose bosonic body part is defined as
\begin{equation*}
    B_i=\sqrt{\sum_{m=1}^{k}(X^{(m)}_i)^2+Y_i^2+Z_i^2}.
\end{equation*}
Analogous dimensional reduction with the same operator $Q=\sum_{i=1}^{n}Q_i$ as in~\eqref{eq:Q} yields in particular that for any smooth test function $\Phi$ with compact support or sufficient decay,
\begin{equation*}
    \mathbb{E}\left[\Phi\left((2\Gamma_i)_{i\in[1,\dots,n]}\right)\right]=\mathbb{E}\left[\Phi\left((R_i^2)_{i\in[1,\dots,n]}\right)\right].
\end{equation*}

\begin{rema}
Since $\frac{1}{2}B_i^2$ is a sum of squares of independent Gaussian variables, it has a multivariate Gamma distribution of parameter $\frac{k+2}{2}$. Therefore its Laplace transform is explicit (as recalled in Theorem~\ref{th:better}) and is used crucially in Royen's proof. In contrast, we will not explicitly use the exact law of $B_i$ nor its Laplace transform in the sequel.
\end{rema}

\subsection{Extension of the previous supersymmetric proof}
Consider the $\tau$-derivative of
\begin{equation*}
    \mathbb{E}\left[\Phi((R_i^2)_{i\in[1,\dots,n]})\right]=\det(C)^{-\frac{k}{2}}\int_{(\mathbb{R}^{k+2|2})^{n}} d\mu\, e^{-S_{\text{tot}}}\Phi((R_i^2)_{i\in[1,\dots,n]}).
\end{equation*}
By examining the proof of Lemma~\ref{lemm:BosonFermion} (essentially by replacing the indicator function by the general test function $\Phi$), we get that
\begin{equation}\label{eq:Gamma_Lemma}
\begin{split}
    &\frac{\partial}{\partial\tau}\left[\det(C)^{-\frac{k}{2}}\int_{(\mathbb{R}^{k+2|2})^{n}} d\mu\, e^{-S_{\text{tot}}}\Phi((R_i^2)_{i\in[1,\dots,n]})\right]\\
    ={}&-\frac{k}{2}\frac{\partial}{\partial\tau}\left[\det(C(\tau'))^{-\frac{k+2}{2}}\det(C(\tau))\int_{(\mathbb{R}^{k+2|2})^{n}} d\mu\, e^{-S_{\text{b}}(\tau')-S_{\text{f}}(\tau)}\Phi((R_i^2)_{i\in[1,\dots,n]})\right]_{\big|\tau'=\tau},
\end{split}
\end{equation}
where $S_{\text{b}}(\tau)=S_{\text{b}}(X^{(1)},\dots,X^{(k)};\tau)+S_{\text{b}}(Y;\tau)+S_{\text{b}}(Z;\tau)$. This is because the $\tau$ derivatives only act on the partition function and the action term, therefore the same boson-fermion cancellation as in Lemma~\ref{lemm:BosonFermion} appears here.

We now Taylor expand the expression
\begin{equation*}
    \Phi((R_i^2)_{i\in[1,\dots,n]})=\sum_{J\subset[1,\dots,n]}\partial_J\Phi((B_i^2)_{i\in[1,\dots,n]})2^{|J|}\prod_{j\in J}\xi_j\eta_j.
\end{equation*}
Similarly to~\eqref{eq:J_expansion}, with the flat Berezin volume form on respective integration domains:
\begin{equation*}
\begin{split}
    &\int d\mu\, e^{-S_{\text{b}}(\tau')-S_{\text{f}}(\tau)}\Phi((R_i^2)_{i\in[1,\dots,n]})\\
    ={}&\sum_{J\subset\{1,\dots,n\}}2^{|J|}\int d\mu\, e^{-S_{\text{b}}(\tau')-S_{\text{f}}(\tau)}\left(\partial_J\Phi((B_i^2)_{i\in[1,\dots,n]})\prod_{j\in J}\xi_j\eta_j\right).
\end{split}
\end{equation*}
Therefore, factoring out the positive bosonic factor (whose exact law can be ignored if we only care about the final sign), we are left again with the $\tau$-derivative of the fermionic expectation contribution consisting of the $\xi,\eta$ terms. Fermionic integration is done as in the Gaussian case, yielding exactly $-(-2)^{|J|}a_{J}(\tau)$ using the fermionic Wick contraction and Lemma~\ref{lemm:AirJordan}. Multiplying by the constant $-\frac{k}{2}$ in~\eqref{eq:Gamma_Lemma}, this concludes the proof of the formula in Theorem~\ref{th:better}.

\begin{rema}
In particular, if $(-1)^{|J|}\partial_J\Phi$ is non-negative for all $J\subset[1,\dots,n]$ on $\mathbb{R}_{\geq 0}^{n}$, we have $\frac{\partial}{\partial\tau}\mathbb{E}\left[\Phi((\Gamma_i)_{i\in[1,\dots,n]})\right]\geq 0$ as in the Gaussian case.
\end{rema}

\appendix
\section{Proof of the dimensional reduction lemma}\label{sec:proof_of_the_dimensional_reduction_lemma}
We sketch the proof of Lemma~\ref{lemm:dim_red} following~\cite[Proposition~2.7]{MR4218682}: it should be noticed that the roles of $\xi,\eta$ are reversed therein, but the proofs are essentially the same.

Recall the supersymmetric localization theorem~\cite[Lemma~16]{MR2728731} (stated here in the flat geometry):
\begin{lemmA}\label{lemm:SusyLoca}
Let $f=f(Y,Z,\xi,\eta)$ be a superfunction acting on even elements, satisfying the relation $Qf=0$ with $Q$ defined in~\eqref{eq:Q}, and decays sufficiently fast for the integral $\int_{(\mathbb{R}^{2|2})^{n}} d\mu\, f$ to exist, where $d\mu$ is the flat Berezin volume form. Then
\begin{equation*}
    \int d\mu\, f=f(0,0,0,0).
\end{equation*}
\end{lemmA}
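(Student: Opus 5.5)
The statement to prove is Lemma~\ref{lemm:SusyLoca}, the flat supersymmetric localization formula on $(\mathbb{R}^{2|2})^{n}$. My plan is the standard deformation argument: introduce a $Q$-exact deformation of the integrand, show the integral is independent of the deformation parameter, and then compute the limit explicitly.

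First, I will record two facts about $Q$ from~\eqref{eq:Q}. One is that $Q$ is an odd derivation. The other is that $\int d\mu\, Qg=0$ for any sufficiently decaying superfunction $g$. This second fact holds term by term. Terms such as $\xi_i\partial_{Y_i}g$ integrate to zero by the bosonic fundamental theorem of calculus, using decay. Terms such as $Y_i\partial_{\eta_i}g$ vanish because the Berezin integral over $\eta_i$ of a $\partial_{\eta_i}$-derivative is zero, since $\partial_{\eta_i}^2=0$.

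Second, I will use that $u=\sum_i(Y_i^2+Z_i^2+2\xi_i\eta_i)$ is $Q$-closed and is itself $Q$-exact. Concretely, one checks that
\begin{equation*}
u=Q\lambda, \qquad \lambda=\sum_i (Y_i\eta_i-Z_i\xi_i)
\end{equation*}
up to a normalization factor, which I will fix by direct computation. Then for $t\geq 0$ set
\begin{equation*}
I(t)=\int d\mu\, f\,e^{-tu}.
\end{equation*}
Differentiating gives
\begin{equation*}
I'(t)=-\int d\mu\, f\,(Q\lambda)\,e^{-tu}=-\int d\mu\, Q\bigl(f\lambda e^{-tu}\bigr)=0.
\end{equation*}
The middle equality uses $Qf=0$, $Q(e^{-tu})=0$, and the derivation property, with care taken over the sign in the odd-derivation rule. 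Hence $I(0)=I(t)$ for every $t$.

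Third, I will compute $\lim_{t\to\infty}I(t)$. To do this, I will rescale $Y,Z\mapsto t^{-1/2}(Y,Z)$ and $\xi,\eta\mapsto t^{-1/2}(\xi,\eta)$. The bosonic Jacobian $t^{-n}$ cancels against the Berezinian factor $t^{+n}$, so the flat measure $d\mu$ is invariant under this rescaling. After rescaling, the integrand becomes
\begin{equation*}
f\bigl(t^{-1/2}Y,\dots\bigr)\,e^{-u}.
\end{equation*}
As $t\to\infty$ this converges to $f(0,0,0,0)e^{-u}$. Here I will expand $f$ in the Grassmann variables: the components of $f$ that carry fermions come with extra factors of $t^{-1/2}$ and therefore vanish in the limit. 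Dominated convergence then applies to the bosonic coefficients. Finally,
\begin{equation*}
\int d\mu\, e^{-u}=1
\end{equation*}
under the $(2\pi)^{-1/2}$ normalization convention of the paper, since the bosonic Gaussian gives $2\pi$ per site and the fermionic integral gives a sign-adjusted factor of $2$ per site. The constants and signs need checking, but this is routine.

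I expect the main obstacle to be the exactness step. I need to verify the precise $\lambda$ with $Q\lambda=u$ (possibly up to a factor of $2$), and to justify rigorously $\int d\mu\, Q(\cdot)=0$ together with the exchange of $\partial_t$ with the integral, under only the vague hypothesis of sufficient decay. If fixing $\lambda$ turns out to be awkward, my fallback is an alternative route: write $f$ as a function of the $Q$-invariant combinations (essentially $Y_i^2+Z_i^2+2\xi_i\eta_i$ together with bosonic parameters), expand via~\eqref{eq:BasicTaylor}, and verify directly in polar coordinates that
\begin{equation*}
\int dY\,dZ\,\partial_\xi\partial_\eta\, F(Y^2+Z^2+2\xi\eta)=F(0).
\end{equation*}
This identity reduces to $-\int_0^\infty F'(s)\,ds=F(0)$.
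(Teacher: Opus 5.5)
Your argument is correct. The paper gives no proof of this lemma; it quotes it from \cite[Lemma~16]{MR2728731}. You instead supply the standard self-contained proof: a $Q$-exact deformation, dilation invariance of the flat measure on $(\mathbb{R}^{2|2})^n$, and the limit $t\to\infty$. What your route buys over the citation is that it shows exactly which hypotheses are needed.

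Your $\lambda$ needs no normalization: $Q(Y_i\eta_i)=\xi_i\eta_i+Y_i^2$ and $Q(Z_i\xi_i)=-\xi_i\eta_i-Z_i^2$, so $Q\lambda=u$ exactly.

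Three small corrections.
\begin{enumerate}
\item Your constant bookkeeping is off. We have $\int_{\mathbb{R}^2}e^{-(Y^2+Z^2)}\,dY\,dZ=\pi$, not $2\pi$. So per site the normalized bosonic factor is $\tfrac12$, and the fermionic factor is $\int\partial_\xi\partial_\eta(1-2\xi\eta)=2$. The product is $1$, as you claim; your stated factors would give $2^n$.
\item Your proof needs every Grassmann component of $f$ to be integrable, not merely that $\int d\mu\,f$ exists. The lemma itself needs this too. For example, $f\equiv 1$ is $Q$-closed, yet $\int d\mu\,f=0\neq f(0)$. In your scheme this appears as $I(t)=1$ for all $t>0$ but $I(0)=0$. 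So the continuity of $I$ at $t=0$, and the vanishing of $\int d\mu\,Q(f\lambda)$ at $t=0$, are exactly where decay of the lower components enters.
\item Your fallback route is strictly weaker than the main one. Besides $Y_i^2+Z_i^2+2\xi_i\eta_i$, the $Q$-invariants include off-diagonal combinations such as $Y_iY_j+Z_iZ_j+\xi_i\eta_j+\xi_j\eta_i$. These are precisely what appear in the integrand $e^{-S_{\text{b}}(Y,Z)-S_{\text{f}}(\xi,\eta)}$ to which the paper applies the lemma. A site-by-site polar-coordinate computation therefore would not cover that application, whereas the deformation argument does, without any classification of invariants.
\end{enumerate}
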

We apply this to the right-hand side of the equation in Lemma~\ref{lemm:dim_red}. We fix the $X$ component and integrate over $(Y,Z,\xi,\eta)\in(\mathbb{R}^{2|2})^n$. Then since $Q(Y_i^2+Z_i^2+2\xi_i\eta_i)=0$, we have $Q(S_{\text{b}}(Y,Z)+S_{\text{f}}(\xi,\eta))=0$ and applying the above lemma with $f(Y,Z,\xi,\eta)=e^{-S_{\text{b}}(X)}e^{-S_{\text{b}}(Y,Z)-S_{\text{f}}(\xi,\eta)}F(X,Y,Z,\xi,\eta)$ shows that $R_i^2$ localizes to $X_i^2$.

\bigskip

\subsection*{Conflict of interest}
The authors have no relevant financial or non-financial interests to disclose.

\subsection*{Data availability}
Data sharing is not applicable to this article as no new data were created or analyzed in this work.

\bibliographystyle{alpha}

\end{document}